\def \red {\textcolor{red} }
\renewcommand{\baselinestretch}{1.15}		
\begin{document}


\input epsf.tex
\def\sp{\bigskip}
\def\ti{\\ \hglue \the \parindent}
\def\gjoin{\diamondplus}

\newtheorem{theorem}{Theorem}[section]
\newtheorem{lemma}[theorem]{Lemma}
\newtheorem{corollary}[theorem]{Corollary}
\newtheorem{prop}[theorem]{Proposition}
\newtheorem{defn}[theorem]{Definition}
\newtheorem{conj}[theorem]{Conjecture}
\newtheorem{claim}[theorem]{Claim}
\newtheorem{remark}[theorem]{Remark}
\newtheorem{alg}[theorem]{Algorithm}
\def\qed{\ifhmode\unskip\nobreak\hfill$\Box$\bigskip\fi \ifmmode\eqno{Box}\fi}
\newenvironment{proof}{\par\noindent{\bf Proof.} }{\qed\par\smallskip}

\def\al{\alpha} \def\be{\beta}  \def\ga{\gamma} \def\dlt{\delta}
\def\eps{\epsilon} \def\th{\theta}  \def\ka{\kappa} \def\lmb{\lambda}
\def\sg{\sigma} \def\om{\omega}
\def\nul{\varnothing} 
\def\st{\colon\,}   
\def\esub{\subseteq}
\def\VEC#1#2#3{#1_{#2},\ldots,#1_{#3}}
\def\VECOP#1#2#3#4{#1_{#2}#4\cdots #4 #1_{#3}}
\def\SE#1#2#3{\sum_{#1=#2}^{#3}}  \def\SGE#1#2{\sum_{#1\ge#2}}
\def\PE#1#2#3{\prod_{#1=#2}^{#3}} \def\PGE#1#2{\prod_{#1\ge#2}}
\def\UE#1#2#3{\bigcup_{#1=#2}^{#3}}
\def\SM#1#2{\sum_{#1\in #2}}
\def\CH#1#2{\binom{#1}{#2}} \def\MULT#1#2#3{\binom{#1}{#2,\ldots,#3}}
\def\FR#1#2{\frac{#1}{#2}}
\def\FL#1{\left\lfloor{#1}\right\rfloor} \def\FFR#1#2{\FL{\frac{#1}{#2}}}
\def\CL#1{\left\lceil{#1}\right\rceil}   \def\CFR#1#2{\CL{\frac{#1}{#2}}}
\def\Gb{\overline{G}}
\def\un#1{\underline{#1}}

\def\B#1{{\bf #1}}      \def\R#1{{\rm #1}}
\def\I#1{{\it #1}}      \def\c#1{{\cal #1}}
\def\C#1{\left | #1 \right |}    
\def\dfc{\R{def}}
\def\CC#1{\Vert #1\Vert}
\def\hQ{\hat Q}
\def\hR{\hat R}
\def\hS{\hat S}
\def\hT{\hat T}

\title{Another Proof of the Generalized Tutte--Berge Formula for $f$-Bounded
Subgraphs}

\author{ 
Zishen Qu\thanks{University of Illinois; {\tt zishenq2@illinois.edu.}
Research supported by Natural Sciences and Engineering Research Council of
Canada (NSERC), [funding reference number PGSD-568936-2022];
financ\'ee par le Conseil de recherches en sciences naturelles
et en gnie du Canada (CRSNG), [num\'ero de r\'ef\'erence PGSD-568936-2022]}
\ and Douglas B. West\thanks{Zhejiang Normal University and University of
Illinois; {\tt dwest@illinois.edu.}  Research supported by National Natural
Science Foundation of China grants NSFC 11871439, 11971439, and U20A2068.}
}
\date{\today}
\maketitle

\begin{abstract}
Given a nonnegative integer weight $f(v)$ for each vertex $v$ in a multigraph
$G$, an {\it $f$-bounded subgraph} of $G$ is a multigraph $H$ contained in $G$
such that $d_H(v)\le f(v)$ for all $v\in V(G)$.  Using Tutte's $f$-Factor
Theorem, we give a new proof of the min-max relation for the maximum size of an
$f$-bounded subgraph of $G$.  When $f(v)=1$ for all $v$, the formula reduces to
the classical Tutte--Berge Formula for the maximum size of a matching.
\end{abstract}

\section{Introduction}
Let $f$ assign a nonnegative integer weight to each vertex $v$ in
a multigraph $G$.  An {\it $f$-factor} of $G$ is a spanning subgraph $H$ of $G$
such that $d_H(v)=f(v)$ for all $v\in V(G)$, where $d_H(v)$ denotes the degree
of vertex $v$ in $H$.  The famous $f$-Factor Theorem of
Tutte~\cite{Tutf1,Tutf2} characterizes when an $f$-factor exists in $G$.
When $f(v)=1$ for all $v$, the condition includes the earlier Tutte Condition
(Tutte~\cite{Tut1}) that is necessary and sufficient for the existence of
$\FR12\C{V(G)}$ pairwise disjoint edges in $G$.  When this condition fails, the
extent of failure provides a formula for the maximum number of pairwise
disjoint edges in $G$, called the Tutte--Berge Formula (Berge~\cite{Ber}).
The analogous behavior holds for the $f$-Factor Theorem.

An {\it $f$-bounded subgraph} of $G$ is a spanning subgraph $H$ such that
$d_H(v)\le f(v)$ for all $v\in V(G)$.  The number of edges in an $f$-factor
must be $\FR12 f(V(G))$, where any function $g$ on $V(G)$ extends to subsets
$S\esub V(G)$ via $g(S)=\SM vS g(v)$.  The maximum number of edges in an
$f$-bounded subgraph of an $n$-vertex multigraph $G$ is $\FR12(f(V(G))-\gamma)$,
where $\gamma$ measures the maximum failure of the $f$-factor condition on $G$
as described below.  Given disjoint vertex sets $A$ and $B$, let $\CC{A,B}$
denote the number of edges having endpoints in both $A$ and $B$, and let
$\CC{A}$ be the number of edges in the subgraph $G[A]$ induced by $A$.

\begin{theorem}[\R{Tutte~\cite{Tutf1,Tutf2}}]\label{ffac}
Given a multigraph $G$ with nonnegative weight function $f$ on $V(G)$ and
disjoint sets $S,T\esub V(G)$, a component of $G-S-T$ with vertex set $Q$ is
said to be ``bad'' if $f(Q)+\CC{Q,T}$ is odd.  The multigraph $G$ has an
$f$-factor if and only if
\begin{equation*}
f(T)\le f(S)+d_{G-S}(T)-q(S,T)
\end{equation*}
for all disjoint $S,T\subset V(G)$, where $q(S,T)$ is the number of
bad components of $G-S-T$.
\end{theorem}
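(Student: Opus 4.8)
The plan is to derive Theorem~\ref{ffac} from Tutte's $1$-Factor Theorem by the classical vertex-splitting (``gadget'') construction, handling necessity separately by a short counting-and-parity argument.

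\textbf{Necessity.} Suppose $G$ has an $f$-factor $F$ and fix disjoint $S,T$. Write $\CC{\cdot}_F$ for an edge count restricted to $F$, and let $Q$ range over the components of $G-S-T$. Summing $d_F$ over $T$, over $S$, and over each $Q$ gives $f(T)=2\CC T_F+\CC{S,T}_F+\sum_Q\CC{Q,T}_F$, $\ f(S)\ge\CC{S,T}_F+\sum_Q\CC{Q,S}_F$, and $f(Q)=2\CC Q_F+\CC{Q,T}_F+\CC{Q,S}_F$, while $d_{G-S}(T)=2\CC T+\sum_Q\CC{Q,T}$. Substituting, the asserted inequality reduces to showing $\CC{Q,S}_F+(\CC{Q,T}-\CC{Q,T}_F)\ge1$ for each bad $Q$. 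This quantity is nonnegative since $F\esub G$, and by the identity for $f(Q)$ it has the same parity as $f(Q)+\CC{Q,T}$, which is odd exactly when $Q$ is bad; hence it is at least $1$. Summing over the $q(S,T)$ bad components yields $f(T)\le f(S)+d_{G-S}(T)-q(S,T)$.

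\textbf{Sufficiency.} I prove the contrapositive. If $f(v)>d_G(v)$ for some $v$, then $S=\nul,\ T=\{v\}$ violates the inequality, so assume $f(v)\le d_G(v)$ throughout. Form a simple graph $G^*$: for each $v$ introduce one ``external'' vertex for every edge of $G$ at $v$ and $d_G(v)-f(v)$ ``internal'' vertices, join every internal vertex of $v$ to every external vertex of $v$, and for each edge $e=uv$ join the external vertex of $u$ at $e$ to the external vertex of $v$ at $e$. In any perfect matching of $G^*$ the internal vertices of $v$ are saturated by external vertices of $v$, leaving exactly $f(v)$ external vertices of $v$ to be matched along inter-gadget edges; declaring those inter-gadget edges to be the edges of the factor gives a bijection between perfect matchings of $G^*$ and $f$-factors of $G$. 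Since $G$ has no $f$-factor, $G^*$ has no perfect matching, so Tutte's $1$-Factor Theorem provides $X\esub V(G^*)$ with $o(G^*-X)>\C X$, where $o(\cdot)$ counts odd components.

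The remaining, and most delicate, step is to extract $S$ and $T$ from $X$. A vertex may be deleted from $X$ without decreasing $o(G^*-X)-\C X$ whenever that merely absorbs it into an existing component, and applying such exchanges one shows $X$ may be taken in a gadget-by-gadget normal form: for each $v$, either all external vertices of $v$ lie in $X$ and no internal one does (then $v\in S$), or all internal vertices of $v$ lie in $X$ and no external one does (then $v\in T$; for $v$ with $d_G(v)=f(v)$ this is just $v\notin S$), or $X$ avoids the gadget of $v$ entirely. With $S$ and $T$ so defined, the components of $G^*-X$ are: the isolated internal vertices of the gadgets of $S$; the isolated external vertices arising from edges between $S$ and $T$; and, for each component $Q$ of $G-S-T$, one component assembled from the surviving gadget parts over $Q$ together with the external vertices at $T$ on edges from $T$ to $Q$. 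Counting vertices gives $\C X=\sum_{v\in S}d_G(v)+\sum_{v\in T}(d_G(v)-f(v))$; the $Q$-component has size congruent to $f(Q)+\CC{Q,T}$ modulo $2$, and there are $\CC{S,T}$ isolated external vertices, so $o(G^*-X)=\sum_{v\in S}(d_G(v)-f(v))+\CC{S,T}+q(S,T)$. Substituting into $o(G^*-X)>\C X$ and using $d_{G-S}(T)=d_G(T)-\CC{S,T}$ collapses everything to $f(T)>f(S)+d_{G-S}(T)-q(S,T)$, the desired violation. The main obstacle is exactly this step: proving that the exchanges really do force the normal form --- including the degenerate vertices with $d_G(v)=f(v)$, whose external vertices are ``loose'' --- and then tracking component sizes and parities correctly, in particular that edges inside $T$ give even components while edges between $S$ and $T$ give the odd isolated vertices that supply the $\CC{S,T}$ correction. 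The construction of $G^*$ and the necessity direction are routine by comparison.
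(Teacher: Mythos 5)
First, a point of comparison: the paper does not prove Theorem~\ref{ffac} at all --- it is quoted from Tutte and used as a black box, and the only argument given is the informal necessity paragraph following the statement. Your necessity argument is a correct and complete formalization of that paragraph: the identity $f(Q)=2\CC{Q}_F+\CC{Q,T}_F+\CC{Q,S}_F$ does show that $\CC{Q,S}_F+\CC{Q,T}-\CC{Q,T}_F$ has the same parity as $f(Q)+\CC{Q,T}$, hence is at least $1$ for each bad component, and summing gives the inequality. That half is fine.

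The sufficiency half has a genuine gap, and you have correctly located it yourself: the normalization of the Tutte set $X$ and the resulting component count are asserted rather than proved, and they are where essentially all of the difficulty of the theorem resides. Two concrete problems. First, the exchange step is only safe under the hypothesis you attach to it (``merely absorbs it into an existing component''); deleting a vertex of $X$ adjacent to several components of $G^*-X$ merges them and can decrease $o(G^*-X)-\C X$, so the real work is to show that every vertex violating the normal form admits a safe exchange, for each gadget configuration --- including the degenerate gadgets with $d_G(v)=f(v)$, whose classification into $T$ versus $R$ you leave ambiguous even though it changes the bookkeeping. Second, even granting the normal form, the claim that each component $Q$ of $G-S-T$ yields a single component of $G^*-X$ is false in general: if $v\in Q$ has $d_G(v)=f(v)$, its gadget has no internal vertices and is a set of pairwise nonadjacent external vertices, so the part of $G^*-X$ sitting over $Q$ can split into several components. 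Your final step needs the upper bound $o(G^*-X)\le \sum_{v\in S}(d_G(v)-f(v))+\CC{S,T}+q(S,T)$ in order to convert $o(G^*-X)>\C X$ into the violation $f(T)>f(S)+d_{G-S}(T)-q(S,T)$; two odd pieces over a single good $Q$ (summing to an even total) would contribute $2$ where $q(S,T)$ contributes $0$ and destroy that bound. Fixing this requires either forcing every vertex with $d_G(v)=f(v)$ out of $R$ (which alters the deficiency computation) or a finer component analysis. The overall route --- reduce to the $1$-Factor Theorem via the bipartite vertex gadget --- is Tutte's own 1954 proof and is certainly viable, but as written the proposal stops exactly at the step that constitutes that proof.
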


Necessity of the condition is easy to explain.  The contributions to $f(T)$
by an $f$-factor $H$ in $G$ come from edges incident to $S$ (at most $f(S)$)
and edges not incident to $S$ (at most $d_{G-S}(T)$).  However, for a component
$G[Q]$ of $G-S-T$, the subgraph of $H$ consisting of edges incident to $Q$ must
have even degree-sum, so the degree-sum cannot equal $f(Q)+\CC{Q,T}$ if that
quantity is odd.  In that case, since omitting edges induced by $Q$ does not
change the parity, $H$ must omit some edge joining $Q$ to $T$ or use some edge
joining $Q$ to $S$, which steals $1$ from the contribution of $d_{G-S}(T)$ or
$f(S)$, respectively, to the bound on $f(T)$.

When $f(v)=1$ for all $v$, the edges of an $f$-bounded subgraph are disjoint
and form a {\it matching} in $G$.  For a pair $(S,T)$ with $T=\nul$,
a component of $G-S$ with vertex set $Q$ is bad precisely when $\C Q$ is odd;
a $1$-factor must match some vertex of $Q$ to a vertex of $S$.  The restriction
of the $f$-factor condition to the case $T=\nul$ then reduces to
$o(G-S)\le\C S$, where $o(H)$ is the number of components of $H$ having an odd
number of vertices.  Tutte's $1$-Factor Theorem~\cite{Tut1} asserts that this
condition is sufficient.  To obtain this as the special case of the $f$-Factor
Theorem for $f(v)=1$ for all $v\in V(G)$, one would need to show that the truth
of the condition for all $S$ with $T=\nul$ implies it also when $T\ne\nul$.

In the matching case, it is clear that $o(G-S)-\C S$ vertices must remain
unmatched when $o(G-S)>\C S$.  The Tutte--Berge Formula says that the strongest
such bound is sharp.

\begin{theorem}[\R{Berge~\cite{Ber}}]\label{btut}
In a multigraph $G$, the maximum number of edges in a matching
is $\FR12[n-\max_{S\esub V(G)}(o(G-S)-\C S)]$.
\end{theorem}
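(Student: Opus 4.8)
The plan is to derive Theorem~\ref{btut} from Theorem~\ref{ffac} in the case $f\equiv1$, by passing to an auxiliary multigraph.  Set $\ga=\max_{S\esub V(G)}\bigl(o(G-S)-\C S\bigr)$; the task is to show that the maximum number of edges in a matching of $G$ equals $\FR12(n-\ga)$.

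I would first handle the (routine) upper bound.  Fix a set $S$ and a matching $M$.  Every edge of $G$ with both ends outside $S$ has both ends in a single component of $G-S$, so for a component with vertex set $Q$ and $\C Q$ odd the number of vertices of $Q$ that $M$ matches into $S$ is odd, hence positive; distinct components use distinct vertices of $S$, so at most $\C S$ of the $o(G-S)$ odd components are entirely covered by $M$, and thus $M$ leaves at least $o(G-S)-\C S$ vertices uncovered.  Taking the best $S$ gives $\C M\le\FR12(n-\ga)$.  Two elementary facts accompany this: taking $S=\nul$ shows $\ga\ge o(G)\ge0$, and since the order $n-\C S$ of $G-S$ has the same parity as its number $o(G-S)$ of odd components, $o(G-S)-\C S\equiv n\pmod2$ for every $S$, so $\ga\equiv n\pmod2$ and $n+\ga$ is even.

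The main step is a matching lower bound.  Form $G'$ from $G$ by adding a set $U$ of $\ga$ new vertices, each joined to every other vertex of $G'$; then $\C{V(G')}=n+\ga$ is even.  I claim $G'$ has a $1$-factor.  It suffices to verify Tutte's condition $o(G'-S')\le\C{S'}$ for every $S'\esub V(G')$ (the case $T=\nul$ of Theorem~\ref{ffac} with $f\equiv1$), via three cases.  If $S'=\nul$, then either $\ga=0$, in which case $G'=G$ and $o(G)\le0$ by the definition of $\ga$, or $\ga\ge1$, in which case $G'$ is connected of even order and $o(G')=0$.  If $\nul\ne S'$ but $U\not\esub S'$, some universal vertex of $U$ survives in $G'-S'$, so $G'-S'$ is connected and $o(G'-S')\le1\le\C{S'}$.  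Finally, if $U\esub S'$, write $S'=U\cup S$ with $S\esub V(G)$; then $G'-S'=G-S$, so $o(G'-S')=o(G-S)\le\C S+\ga=\C{S'}$ by the definition of $\ga$.  Hence $G'$ has a $1$-factor $M'$, and deleting the vertices of $U$ together with the at most $\ga$ edges of $M'$ meeting $U$ leaves a matching of $G$ covering all but at most $\ga$ vertices of $G$; so $G$ has a matching of size at least $\FR12(n-\ga)$.  With the upper bound this forces equality, proving Theorem~\ref{btut}.

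I expect the only genuine obstacle to be justifying that the $1$-factor condition for $G'$ needs to be checked only when $T'=\nul$, i.e.\ confirming the condition of Theorem~\ref{ffac} for $G'$ in the case $T'\ne\nul$.  For $f\equiv1$ this is exactly the classical observation that Tutte's $1$-Factor Theorem is the $T=\nul$ specialization of the $f$-Factor Theorem; it can be proved by showing that the defect $\C{T'}-\C{S'}-d_{G'-S'}(T')+q(S',T')$ does not increase when a vertex of $T'$ is either absorbed into $S'$ or removed from $S'\cup T'$, using the parity relation $\C{T'}\equiv d_{G'-S'}(T')+q(S',T')\pmod2$ (each bad component contributing its parity).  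Alternatively one may simply cite Tutte's $1$-Factor Theorem~\cite{Tut1} directly, after which the argument above is complete, everything else being bookkeeping.
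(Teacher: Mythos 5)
Your proof is correct, and it is the classical derivation of the Tutte--Berge Formula: add enough universal vertices to absorb the deficiency, verify Tutte's condition for the augmented graph, and delete the added vertices from the resulting perfect matching. The paper does not actually prove Theorem~\ref{btut} (it is quoted as Berge's result), but its proof of the general Theorem~\ref{btgen} is explicitly billed as ``a multigraph analogue of the trick used in proving the Tutte--Berge Formula from Tutte's $1$-Factor Theorem,'' which is precisely your trick; so in spirit you and the paper are aligned, while the executions differ instructively. You add $\gamma$ mutually adjacent universal vertices and then need only the one-set condition $o(G'-S')\le\C{S'}$, which your three cases dispatch cleanly; the paper instead adds a single vertex $w$ joined by $\gamma$ parallel edges to every vertex with capacity $f'(w)=\gamma$, and must verify the full two-set condition of Theorem~\ref{ffac} for every $(S',T')$, at the cost of a much longer case analysis --- unavoidable there, since for general $f$ there is no one-set analogue of Tutte's condition. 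The only soft spot is the one you identify yourself: your verification covers only the $T'=\nul$ instances of the condition in Theorem~\ref{ffac}, so you must either invoke Tutte's $1$-Factor Theorem~\cite{Tut1} directly (legitimate --- the paper likewise treats it as known and notes that deducing it from the $f$-Factor Theorem is a separate step) or actually carry out the reduction from general $(S',T')$ to $T'=\nul$ when $f\equiv1$; your one-sentence sketch of that reduction is not yet a proof, since tracking how $q(S',T')$ changes when a vertex of $T'$ is absorbed into $S'$ requires some care. With the citation of \cite{Tut1}, the argument is complete.
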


Schrijver~\cite{Sch} extended this formula to general $f$, showing that the
extent of failure of the $f$-factor condition in Theorem~\ref{ffac} determines
the maximum size of an $f$-bounded subgraph.  Define the {\it deficiency}
$\dfc(S,T)$ of an ordered pair $(S,T)$ of disjoint vertex subsets in a
multigraph $G$ by
\begin{equation}\label{ddfc}
\dfc(S,T)=f(T)-f(S)-d_{G-S}(T)+q(S,T).
\end{equation}
In this language, Schrijver's result is the following.

\begin{theorem}\label{btgen}
Given a nonnegative weight function $f$ on a multigraph $G$,
the maximum number of edges in an $f$-bounded subgraph of $G$ is 
\begin{equation*}
\FR12[f(V(G))-\max_{S,T\esub V(G)}\dfc(S,T)],
\end{equation*}
where the maximization is over ordered pairs $(S,T)$ of disjoint vertex subsets
in $G$.
\end{theorem}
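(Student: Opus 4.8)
The plan is to obtain both bounds from one auxiliary multigraph. Form $G'$ from $G$ by adding a new vertex $z$ joined to each $v\in V(G)$ by $f(v)$ parallel edges, and extend $f$ to $V(G')$ by a value $f(z)$ to be chosen later. The engine of the argument is the identity
\begin{equation*}
\dfc_{G'}(S\cup\{z\},T)=\dfc_G(S,T)-f(z)\qquad\text{for all disjoint }S,T\esub V(G),
\end{equation*}
which holds because $G'-(S\cup\{z\})=G-S$ (so $d_{G'-(S\cup\{z\})}(T)=d_{G-S}(T)$, and the components of $G'-(S\cup\{z\})-T$ are those of $G-S-T$, a component $Q$ being bad in $G'$ exactly when $f(Q)+\CC{Q,T}$ is odd, i.e.\ exactly when it is bad in $G$), while $z$ adds exactly $f(z)$ to $f(S\cup\{z\})$.

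First I would prove $\C{E(H)}\le\FR12(f(V(G))-\gamma)$ for every $f$-bounded subgraph $H$ of $G$, where $\gamma=\max_{S,T}\dfc_G(S,T)$. Put $f(z):=f(V(G))-2\C{E(H)}=\SM v{V(G)}(f(v)-d_H(v))\ge0$, and in $G'$ enlarge $H$ to $H'$ by appending, for each $v$, exactly $f(v)-d_H(v)$ of the parallel $vz$-edges; then $d_{H'}(v)=f(v)$ on $V(G)$ and $d_{H'}(z)=f(z)$, so $H'$ is an $f$-factor of $G'$. By Theorem~\ref{ffac}, $\dfc_{G'}(S',T')\le0$ for all disjoint $S',T'\esub V(G')$; specializing to $S'=S\cup\{z\}$ and using the identity gives $\dfc_G(S,T)\le f(z)$ for all disjoint $S,T\esub V(G)$, so $\gamma\le f(z)$, i.e.\ $\C{E(H)}\le\FR12(f(V(G))-\gamma)$. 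Taking $H$ empty gives $\gamma\le f(V(G))$, and $(S,T)=(\nul,\nul)$ gives $\gamma\ge0$. A short computation (via $q(S,T)\equiv f(R)+\CC{R,T}\pmod2$ with $R=V(G)\setminus S\setminus T$) also yields $\dfc_G(S,T)\equiv f(V(G))\pmod2$ for all $S,T$, so $\gamma\equiv f(V(G))\pmod2$ and $\FR12(f(V(G))-\gamma)$ is an integer.

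Then I would realize this bound. Set $f(z):=\gamma$ and claim $G'$ satisfies the condition of Theorem~\ref{ffac}. Granting this, $G'$ has an $f$-factor $H'$, and $H:=H'\cap G$ is $f$-bounded (since $d_H(v)\le d_{H'}(v)=f(v)$) with $\C{E(H)}=\C{E(H')}-d_{H'}(z)=\FR12(f(V(G))+\gamma)-\gamma=\FR12(f(V(G))-\gamma)$, finishing the proof together with the upper bound. To verify the condition, take disjoint $S',T'\esub V(G')$; I would show $\dfc_{G'}(S',T')\le0$ by cases on the position of $z$. If $z\in S'$, the identity gives $\dfc_{G'}(S',T')=\dfc_G(S'\setminus\{z\},T')-\gamma\le0$. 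If $z\in T'$, writing $T=T'\setminus\{z\}$ and expanding (using $d_{G'-S'}(z)=f(V(G))-f(S')$ and that a component $Q$ of $G-S'-T$ is bad in $G'$ exactly when $\CC{Q,T}$ is odd) gives $\dfc_{G'}(S',T')=\gamma-f(V(G))-d_{G-S'}(T)+r$, where $r$ is the number of such bad components; since each of them sends an edge to $T$ we have $r\le\CC{V(G)\setminus S'\setminus T,\,T}\le d_{G-S'}(T)$, so $\dfc_{G'}(S',T')\le\gamma-f(V(G))\le0$.

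The main obstacle will be the remaining case $z\notin S'\cup T'$. Then $S',T'\esub V(G)$; in $G'-S'-T'$ the vertex $z$ is adjacent exactly to the positive-weight vertices outside $S'\cup T'$, so the component of $G'-S'-T'$ containing $z$ consists of $z$ together with the union of all positive-weight components of $G-S'-T'$, while every other component of $G'-S'-T'$ is a weight-$0$ component of $G-S'-T'$. Here $\dfc_{G'}(S',T')\le0$ reduces to $q_{G'}(S',T')\le f(S')+d_{G-S'}(T')$; the bad weight-$0$ components number at most $\CC{V(G)\setminus S'\setminus T',T'}\le d_{G-S'}(T')$, so it remains to absorb the possible badness of the enlarged component, and the congruence $\gamma\equiv f(V(G))\pmod2$ shows that this component is bad only when $f(S')$ plus the number of $G$-edges from the positive-weight part to $T'$ is odd — which then supplies the extra unit needed on the right. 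I expect the delicate point to be making these edge-counting inequalities exact, in particular in the degenerate subcases: $T'$ of weight $0$, $f(S')=0$, or no positive-weight vertex outside $S'\cup T'$ (so the ``enlarged'' component is just $\{z\}$).
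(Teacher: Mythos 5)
Your proposal is correct in substance but takes a genuinely different route from the paper. The paper proves weak duality by a direct degree-count (its Lemma~\ref{weak}) and then, for sharpness, attaches a vertex $w$ joined by $\gamma$ parallel edges to \emph{every} vertex with $f'(w)=\gamma$; you instead attach $z$ by $f(v)$ parallel edges to each $v$ and reuse the same gadget for both directions, choosing $f(z)=f(U)-2\C{E(H)}$ to extract the upper bound from the \emph{necessity} half of Tutte's theorem and $f(z)=\gamma$ to extract the lower bound from the sufficiency half. Your identity $\dfc_{G'}(S\cup\{z\},T)=\dfc_G(S,T)-f(z)$ is correct and does both jobs at once, which is an economy the paper does not have. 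The trade-off shows up in the case $z\notin S'\cup T'$: in the paper's construction $G'[R']$ is automatically connected when $w\in R'$, so $q'\le 1$ there, whereas in yours the zero-weight components of $G[R]$ survive as separate components and must be counted; conversely your cases $z\in S'$ and $z\in T'$ collapse more cleanly than the paper's Case 3.

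The one case you leave open does close, but you need a slightly sharper count than the one you wrote. With $R=U-S'-T'$, split $R$ into $R_0$ (union of zero-weight components of $G[R]$) and $R_+$ (the rest). Bound the bad zero-weight components by $\CC{R_0,T'}$, not by $\CC{R,T'}$: each such component has an odd, hence positive, number of edges to $T'$, and these edges lie in $R_0$. The component $Q_z$ containing $z$ satisfies $f(Q_z)=\gamma+f(R)$ and $\CC{Q_z,T'}=f(T')+\CC{R_+,T'}$, so by $\gamma\equiv f(S')+f(T')+f(R)\pmod 2$ it is bad exactly when $f(S')+\CC{R_+,T'}$ is odd, in which case $f(S')+\CC{R_+,T'}\ge1$. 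Hence
$$q_{G'}(S',T')\le \CC{R_0,T'}+f(S')+\CC{R_+,T'}\le f(S')+\CC{R,T'}\le f(S')+d_{G-S'}(T'),$$
which is exactly the inequality you reduced this case to; the degenerate subcases ($R_+=\nul$, $f(S')=0$, etc.) are all covered by this computation. With that refinement your argument is complete.
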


Consistent with the linear programming approach to matching, Schrijver used the
notation $b$ for our $f$ and used the term ``simple $b$-matching'' for our
``$f$-bounded subgraph''.  In our notation, his expression for the result
was as follows.

\begin{theorem}[{\rm Schrijver~\cite{Sch}, p.~569}]\label{schr}
Given a vertex weighting $f$ on a multigraph $G$, the maximum size of an
$f$-bounded subgraph is the minimum, over all pairs $(S,T)$ of disjoint subsets
of $V(G)$, of
\begin{equation*}
f(S)+\CC{T}+\sum\FL{(f(Q)+\CC{Q,T})/2},
\end{equation*}
where the sum is over components $G[Q]$ of $G-S-T$.
\end{theorem}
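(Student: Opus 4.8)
The plan is to derive the formula from Tutte's $f$-Factor Theorem (Theorem~\ref{ffac}) by attaching a single weighted vertex. The first step is purely algebraic: expanding with $f(V(G))=f(S)+f(T)+\sum_Q f(Q)$ and $d_{G-S}(T)=2\CC T+\sum_Q\CC{Q,T}$ shows that for every disjoint pair $(S,T)$,
\[
\FR12\bigl(f(V(G))-\dfc(S,T)\bigr)=f(S)+\CC T+\sum_Q\FL{(f(Q)+\CC{Q,T})/2},
\]
the floors accounting precisely for the bad components counted by $q(S,T)$; in particular $f(V(G))-\dfc(S,T)$ is always even. Thus Theorem~\ref{schr} is equivalent to Theorem~\ref{btgen}, and it suffices to show that the maximum size $m$ of an $f$-bounded subgraph of $G$ equals $\FR12\bigl(f(V(G))-\max_{S,T}\dfc(S,T)\bigr)$. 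We may assume $f(v)\ge1$ for all $v$, since deleting a vertex of weight $0$ together with its incident edges (which lie in no $f$-bounded subgraph) affects none of $m$, $f(V(G))$, or $\max_{S,T}\dfc(S,T)$.

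For an integer $\delta\ge0$ with $\delta\equiv f(V(G))\pmod 2$, let $G_\delta'$ arise from $G$ by adding a vertex $z$ joined to each $v\in V(G)$ by $f(v)$ parallel edges, with $f(z)=\delta$. An $f$-factor $H'$ of $G_\delta'$ restricts to an $f$-bounded subgraph $H'-z$ with $\FR12(f(V(G))-\delta)$ edges, and conversely an $f$-bounded subgraph $H$ with $f(V(G))-2\C{E(H)}=\delta$ extends to an $f$-factor of $G_\delta'$ by routing $f(v)-d_H(v)$ of the $vz$-edges into the factor at each $v$. Since $G$ has an $f$-bounded subgraph of every size between $0$ and $m$, this shows $G_\delta'$ has an $f$-factor exactly when $\delta\ge f(V(G))-2m$. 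The one computation underlying everything: for disjoint $S,T\esub V(G)$, the pair $(S\cup\{z\},T)$ in $G_\delta'$ leaves the components of the graph with both sets removed, and their badness, exactly as in $G$ with $(S,T)$ removed, whence $\dfc_{G_\delta'}(S\cup\{z\},T)=\dfc_G(S,T)-\delta$.

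Write $\delta_0=f(V(G))-2m$. The inequality $\dfc_G(S,T)\le\delta_0$ for all disjoint $S,T$ is now immediate: $G_{\delta_0}'$ has an $f$-factor, so the condition of Theorem~\ref{ffac} gives $\dfc_{G_{\delta_0}'}(S\cup\{z\},T)\le0$, i.e.\ $\dfc_G(S,T)\le\delta_0$. For the reverse, if $\delta_0\le1$ then $(\nul,\nul)$ suffices, because $\dfc_G(\nul,\nul)$ equals the number of components $Q$ of $G$ with $f(Q)$ odd, which is at least $1$ when $f(V(G))$ is odd. Suppose $\delta_0\ge2$. Then $G_{\delta_0-2}'$ has no $f$-factor, so Theorem~\ref{ffac} yields disjoint $S',T'\esub V(G_{\delta_0-2}')$ with $\dfc_{G_{\delta_0-2}'}(S',T')\ge1$; since $f(V(G_{\delta_0-2}'))=2f(V(G))-2m-2$ is even, the parity noted in the first paragraph upgrades this to $\dfc_{G_{\delta_0-2}'}(S',T')\ge2$.

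The crux is that $z$ must lie in $S'$. If $z$ lies in a component of $G_{\delta_0-2}'-S'-T'$, then (using $f\ge1$) $z$ is adjacent to all of $V(G)\setminus S'\setminus T'$, so this is the only component and $\dfc_{G_{\delta_0-2}'}(S',T')\le -f(S')-d_{G-S'}(T')+1\le1$, contradicting $\dfc\ge2$. If $z\in T'$, say $T'=T''\cup\{z\}$, then $z$ contributes its whole degree $f(V(G))-f(S')$ to $d_{G_{\delta_0-2}'-S'}(T')$ while adding only $\delta_0-2$ to $f(T')$, and a short computation gives $\dfc_{G_{\delta_0-2}'}(S',T')=\delta_0-2-f(V(G))-d_{G-S'}(T'')+q'(S',T')$, where $q'(S',T')$ counts the components $Q$ of $G-S'-T''$ with $\CC{Q,T''}$ odd; since $q'(S',T')\le d_{G-S'}(T'')$ and $\delta_0\le f(V(G))$, this is at most $-2$, again a contradiction. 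Hence $z\in S'$, and setting $(S,T)=(S'\setminus\{z\},T')$ the identity of the second paragraph gives $\dfc_G(S,T)=\dfc_{G_{\delta_0-2}'}(S',T')+(\delta_0-2)\ge\delta_0$. With the easy inequality this yields $\max_{S,T}\dfc_G(S,T)=\delta_0=f(V(G))-2m$, which is the theorem. The main obstacle is precisely this last case analysis: seeing why placing $z$ on the $T$-side or inside a component is self-defeating — it either collapses $G_{\delta_0-2}'-S'-T'$ to a single component (destroying the count of bad components) or pours $z$'s large degree into $d_{G-S'}(T')$ for a negligible gain in $f(T')$ — while everything else is bookkeeping around the gadget.
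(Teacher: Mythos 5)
Your proof is correct, and it reaches the theorem by a genuinely different route from the paper, even though both arguments lean on Tutte's $f$-Factor Theorem via a one-vertex gadget. The paper attaches a vertex $w$ with $\gamma$ parallel edges to \emph{every} vertex, where $\gamma=\max\dfc$, and then does the hard work in the forward direction: a three-case analysis (according to whether $w$ lands in $S'$, $R'$, or $T'$) verifying that the augmented graph satisfies Tutte's condition, so that sufficiency \emph{produces} the large $f$-bounded subgraph; weak duality is proved separately by a direct counting lemma. You instead attach $z$ with $f(v)$ parallel edges to each $v$ and run the $f$-Factor Theorem in both logical directions: necessity applied to $G'_{\delta_0}$ (which visibly has an $f$-factor) hands you weak duality for free, and sufficiency applied in contrapositive to $G'_{\delta_0-2}$ (which visibly has none) hands you a violated pair, which you then \emph{pull back} to a high-deficiency pair in $G$ by showing $z$ must sit in $S'$. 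Your case analysis is shorter and cleaner than the paper's: placing $z$ in $R'$ collapses $G'-S'-T'$ to one component, and placing $z$ in $T'$ costs $f(V(G))-f(S')$ in the degree term for a gain of only $\delta_0-2$ in $f(T')$; both computations check out, as does the identity $\dfc_{G'_\delta}(S\cup\{z\},T)=\dfc_G(S,T)-\delta$ and the parity upgrade from $\ge1$ to $\ge2$. What your approach buys is that the dual certificate is extracted directly from the failure of the factor condition rather than verified against it; what the paper's buys is that it never needs the positivity assumption on $f$.

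One small point to tighten: the reduction to $f(v)\ge1$ is load-bearing (it is exactly what forces $G'_{\delta_0-2}-S'-T'$ to be connected when $z\in R'$), and the claim that deleting a weight-$0$ vertex preserves $\max_{S,T}\dfc(S,T)$ in both directions deserves a line. In fact you only need the easy direction: for $v$ with $f(v)=0$ and $G^-=G-v$, the pair $(S\cup\{v\},T)$ in $G$ has the same deficiency as $(S,T)$ in $G^-$, so $\max_G\dfc\ge\max_{G^-}\dfc$; combined with your upper bound $\dfc_G(S,T)\le\delta_0$, which is proved for general $f$ without positivity, this suffices. Also, ``$G'_\delta$ has an $f$-factor exactly when $\delta\ge f(V(G))-2m$'' implicitly needs $\delta\le f(V(G))$ (else $f(z)$ exceeds $d_{G'_\delta}(z)$), but both values of $\delta$ you use satisfy this.
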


To see that our Theorem~\ref{btgen} involving Tutte's $f$-factor deficiency is
the same as this theorem by Schrijver, we let $R=V(G)-S-T$ and compute
\begin{align*}
\FR12[f(V(G))-\dfc(S,T)]&=
\FR12[f(R)+f(S)+f(T)-f(T)+f(S)+d_{G-S}(T)-q(S,T)]\\
&=f(S)+\FR12[f(R)+2\CC{T}+\CC{T,R}-q(S,T)]\\
&=f(S)+\CC{T}+\sum\FL{(f(Q)+\CC{Q,T})/2}.
\end{align*}

Schrijver obtained his result by setting all capacities to $1$ in a more
general theorem where edges have capacities.  That theorem he reduced to another
min-max relation for $f$-bounded subgraphs, which he proved by expanding each
vertex $v$ into a set of size $f(v)$ and each edge into a complete bipartite
graph and then applying the Tutte--Berge Formula.  He observed that that
approach also gives an independent proof of Tutte's $f$-Factor Theorem,
which is our starting point.

An $f$-factor with $f(v)=k$ for all $v$ is called a ``$k$-factor''; similarly
we refer to a $k$-bounded subgraph.  In addition to the Tutte--Berge Formula
for maximum $1$-bounded subgraphs, maximum $2$-bounded subgraphs have been
studied, under the name ``simple $2$-matching''.  For simple subcubic graphs,
Hartvigsen and Li~\cite{HL} provided a polynomial-time algorithm to find a
maximum $2$-bounded subgraph and a structural theorem for the resulting
subgraphs.  They also observed that in the min-max relation on that class
(in our notation) one need only optimize over pairs $(S,T)$ such that $G[T]$
has no edges.  This is analogous to restricting to pairs $(S,T)$ with $T=\nul$
in the Tutte--Berge Formula.

\section{Weak Duality and Parity}
Let $h(G;f)$ be the maximum sum of vertex degrees of an $f$-bounded subgraph
of $G$.  We use $h$ to avoid the factor of $1/2$ in the statements.  To
streamline notation in applying $f$ to the full vertex set, let $U=V(G)$.

We want to prove $h(G;f)=\min_{S,T}[f(U)-\dfc(S,T)]$.  As in most min-max
relations, ``weak'' duality is easy to prove.  It is a bit more subtle than the
necessity of the
$f$-factor condition because we cannot assume $d_H(Q)=f(Q)$ when $H$ is 
an $f$-bounded subgraph of $G$ and $G[Q]$ is a component of $G-S-T$.


\begin{lemma}\label{weak}
For a multigraph $G$ with vertex set $U$ and disjoint $S,T\esub U$, every
$f$-bounded subgraph of $G$ has degree-sum at most $f(U)-\dfc(S,T)$.
\end{lemma}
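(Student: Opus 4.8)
The plan is to fix an $f$-bounded subgraph $H$ of $G$ and an arbitrary disjoint pair $(S,T)$, and to bound $d_H(U)=2\C{E(H)}$ by sorting $E(H)$ according to its position relative to $S$, $T$, and the components of $G-S-T$. Set $R=U-S-T$ and let $G[Q_1],\ldots,G[Q_m]$ be the components of $G-S-T$. Each edge of $H$ has exactly one type: inside $S$, joining $S$ to $T$, joining $S$ to some $Q_i$, inside $T$, joining $T$ to some $Q_i$, or inside some $Q_i$. Write $a$, $b$, $t$ for the numbers of edges of $H$ of the first, second, and fourth types, and write $s_i$, $t_i$, $r_i$ for the numbers joining $S$ to $Q_i$, joining $T$ to $Q_i$, and lying inside $Q_i$; then $\C{E(H)}=a+b+t+\sum_i(s_i+t_i+r_i)$.

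Next I would apply the inequalities $d_H(v)\le f(v)$ one part at a time. Summing over $v\in S$ gives $2a+b+\sum_i s_i=d_H(S)\le f(S)$, so $a+b+\sum_i s_i\le f(S)$. The edges of $H$ inside $T$ form a subgraph of $G[T]$, so $t\le\CC T$. The key step is the per-component estimate: summing $d_H(v)\le f(v)$ over $v\in Q_i$ gives $s_i+t_i+2r_i=d_H(Q_i)\le f(Q_i)$, hence $t_i+2r_i\le f(Q_i)$ --- and here one must resist writing equality (valid for a true $f$-factor but not for an $f$-bounded subgraph), which is exactly the subtlety flagged just before the lemma. Adding $t_i\le\CC{Q_i,T}$ to both sides yields $2(t_i+r_i)\le f(Q_i)+\CC{Q_i,T}$; since the left side is an even integer, this is equivalent to $t_i+r_i\le\FL{(f(Q_i)+\CC{Q_i,T})/2}$, and also to the statement that the bound on $2(t_i+r_i)$ drops by $1$ exactly when $Q_i$ is bad.

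Finally I would add the pieces. In edge-count form, $\C{E(H)}=(a+b+\sum_i s_i)+t+\sum_i(t_i+r_i)\le f(S)+\CC T+\sum_i\FL{(f(Q_i)+\CC{Q_i,T})/2}$, which is Schrijver's expression in Theorem~\ref{schr} and hence equals $\FR12[f(U)-\dfc(S,T)]$; doubling gives $d_H(U)\le f(U)-\dfc(S,T)$. Equivalently, in degree-sum form one combines $2(a+b+\sum_i s_i)\le 2f(S)$, $2t\le 2\CC T$, and the per-component bounds to get $2f(S)+2\CC T+\sum_i(f(Q_i)+\CC{Q_i,T})-q(S,T)$, which simplifies to $f(U)-\dfc(S,T)$ using $\sum_i f(Q_i)=f(R)=f(U)-f(S)-f(T)$ and $\sum_i\CC{Q_i,T}=d_{G-S}(T)-2\CC T$.

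I do not anticipate a real obstacle: the argument is essentially bookkeeping, and the only genuinely non-trivial point is the per-component step, where the inequality $t_i+2r_i\le f(Q_i)$ together with the parity of $2(t_i+r_i)$ --- rather than any exact-degree information --- recaptures the contribution of the bad components. A secondary point worth keeping in mind is that the estimate $a+b+\sum_i s_i\le f(S)$ discards the nonnegative quantity $a$, the number of edges of $H$ induced by $S$; this slack is harmless for weak duality but is precisely the quantity one will need to control when proving the matching lower bound.
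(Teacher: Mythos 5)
Your proof is correct and follows essentially the same route as the paper: both decompose the edges of $H$ by their position relative to $S$, $T$, and the components of $G-S-T$, bound the $S$-incident and inside-$T$ contributions by $f(S)$ and $\CC T$, and recover the $-q(S,T)$ term from the per-component parity argument (your inequality $2(t_i+r_i)\le f(Q_i)+\CC{Q_i,T}$ with even left side is exactly the paper's observation that the subgraph $H[Q]$ plus the $Q$--$T$ edges of $H$ has even degree-sum). The only difference is cosmetic: you count edges and arrive at Schrijver's form of the bound before converting, while the paper works with degree sums throughout.
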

\begin{proof}
Given an $f$-bounded subgraph $H$ of $G$, we must prove
$$d_H(U)\le f(U)-f(T)+f(S)+d_{G-S}(T)-q(S,T).$$
Since $d_H(S)\le f(S)$, it suffices to prove
$$d_H(T)+d_H(R)\le f(S)+f(R)+d_{G-S}(T)-q(S,T),$$
where $R=U-S-T$.  The contributions to $d_H(T)+d_H(R)$ using edges incident
to $S$ sum to at most $f(S)$, and the contributions using edges
within $T$ sum to at most $2\CC{T}$.

The remaining edges of $H$ are within $R$ or join $T$ to $R$.  With $Q$
denoting the vertex set of a component of $G[R]$, let $H'$ be the subgraph of
$H$ consisting of $H[Q]$ plus the edges in $H$ joining $Q$ to $T$.  The
degree-sum of this subgraph is at most $f(Q)+\CC{Q,T}$.  However, since $H'$
is a graph, its degree sum is even, so when the component is bad we must
subtract $1$ from this bound on its degree sum.  Summing over all components of
$G[R]$ yields the bound $f(R)+d_{G-S}(T)-q(S,T)$ on the contributions to
$d_H(T)+d_H(R)$ by edges not incident to $S$, since
$d_{G-S}(T)=2\CC{T}+\CC{R,T}$.
\end{proof}

The special case $\dfc(\nul,\nul)$ always evaluates to the number of components
of $G$ on which $f$ sums to an odd value.  Hence the maximum deficiency is
always nonnegative.  We also have a parity condition, which is well-known
and is consistent with the factor of $1/2$ in Theorem~\ref{btgen}.

\begin{lemma}[Parity Lemma]\label{parity}
For a multigraph $G$ with vertex set $U$ and disjoint $S,T\esub U$, the
values of $\dfc(S,T)$ and $f(U)$ have the same parity.
\end{lemma}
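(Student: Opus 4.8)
The plan is to compare $\dfc(S,T)$ with $f(U)$ directly using the definition in \eqref{ddfc} and tracking parities term by term. Write $R=U-S-T$, so that $f(U)=f(S)+f(T)+f(R)$. Subtracting, we get
\[
f(U)-\dfc(S,T)=2f(S)+f(R)+d_{G-S}(T)-q(S,T).
\]
Modulo $2$, the term $2f(S)$ vanishes, so it suffices to show that $f(R)+d_{G-S}(T)-q(S,T)$ has the same parity as $f(R)+f(T)$, i.e. that $d_{G-S}(T)-q(S,T)\equiv f(R)$ is replaced appropriately; more precisely we must verify
\[
f(U)-\dfc(S,T)\equiv f(U)\pmod 2
\quad\Longleftrightarrow\quad
f(R)+d_{G-S}(T)+q(S,T)\equiv f(R)+f(T)\pmod 2,
\]
so the real claim reduces to $d_{G-S}(T)+q(S,T)\equiv f(T)\pmod 2$.

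Next I would expand $d_{G-S}(T)$ according to where the edges counted by it go. As noted in the proof of Lemma~\ref{weak}, $d_{G-S}(T)=2\CC{T}+\CC{R,T}$, so modulo $2$ we have $d_{G-S}(T)\equiv \CC{R,T}$. Since $R$ is partitioned into the vertex sets $Q$ of the components of $G-S-T$, we have $\CC{R,T}=\sum_Q \CC{Q,T}$, hence $d_{G-S}(T)\equiv \sum_Q \CC{Q,T}\pmod 2$.

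The key step is then the parity bookkeeping over components. For each component with vertex set $Q$, the quantity $f(Q)+\CC{Q,T}$ is odd exactly when the component is bad, contributing $1$ to $q(S,T)$; otherwise $f(Q)+\CC{Q,T}$ is even. In either case $\CC{Q,T}\equiv f(Q)+[\text{$Q$ bad}]\pmod 2$. Summing over all components of $G-S-T$ gives $\sum_Q\CC{Q,T}\equiv f(R)+q(S,T)\pmod 2$, since the bad-component indicators sum to $q(S,T)$ and $\sum_Q f(Q)=f(R)$. Combining with $d_{G-S}(T)\equiv\sum_Q\CC{Q,T}$ yields $d_{G-S}(T)+q(S,T)\equiv f(R)\pmod 2$. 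Finally, plugging back in,
\[
f(U)-\dfc(S,T)=2f(S)+f(R)+\bigl(d_{G-S}(T)-q(S,T)\bigr)\equiv f(R)+f(R)=0\cdot f(R)\pmod 2,
\]
which is even, so $f(U)-\dfc(S,T)\equiv 0\pmod 2$; equivalently, $\dfc(S,T)$ and $f(U)$ have the same parity. I do not anticipate a serious obstacle here — the only mildly delicate point is keeping the roles of $f(R)$, $f(T)$, and the $\CC{\cdot}$ terms straight, and making sure the sum of the bad-component indicators is exactly $q(S,T)$, which is immediate from the definition of $q$.
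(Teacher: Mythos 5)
Your final computation is correct and is essentially the paper's own proof: expand $f(U)-\dfc(S,T)=2f(S)+f(R)+d_{G-S}(T)-q(S,T)$, reduce $d_{G-S}(T)$ modulo $2$ to $\CC{R,T}=\sum_Q\CC{Q,T}$, and observe that $\sum_Q\bigl(f(Q)+\CC{Q,T}\bigr)\equiv q(S,T)\pmod 2$ because $q(S,T)$ counts exactly the components for which that summand is odd. The intermediate ``reformulation'' in your second display is both misstated (the claim to be proved is $f(U)-\dfc(S,T)\equiv 0$, not $f(U)-\dfc(S,T)\equiv f(U)$, and the asserted equivalence also drops an $f(S)$ term) and never used in what follows, so it should simply be deleted; the rest of the argument stands as written.
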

\begin{proof}
It suffices to show that $f(U)-\dfc(S,T)$ is even.  Canceling even quantities,
including $2\CC{T}$, this value has the same parity as
$f(R)+\CC{R,T}-q(S,T)$.  Splitting up $f(R)+\CC{R,T}$ by the components
of $G[R]$, the parity of $f(R)+\CC{R,T}$ is the same as the parity of 
$q(S,T)$, since $q(S,T)$ counts $1$ for each component $G[Q]$ of $G[R]$
such that $f(Q)+\CC{Q,T}$ is odd.  Hence the specified quantity is even.
\end{proof}

\section{Proof of the Theorem}

To prove the min-max relation, we must prove that the upper bound in
Lemma~\ref{weak} is sharp.  We apply the $f$-Factor Theorem, using a multigraph
analogue of the trick used in proving the Tutte--Berge Formula from Tutte's
$1$-Factor Theorem.

\begin{theorem}
For a multigraph $G$ with vertex set $U$ and a nonnegative weight function $f$,
$$
h(G;f)=\min_{S,T}[f(U)-\dfc(S,T)],
$$
where the minimum is over all disjoint subsets $S,T\esub U$.
\end{theorem}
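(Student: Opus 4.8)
The plan is to prove the reverse inequality to Lemma~\ref{weak}, namely that there exist disjoint sets $S,T\esub U$ with $h(G;f)\ge f(U)-\dfc(S,T)$; combined with weak duality this gives equality. The idea is to add a single new vertex $z$ to $G$, joined to every vertex of $G$ by enough parallel edges, and to choose $f(z)$ so that the enlarged multigraph $G'$ \emph{just fails} to have an $f$-factor by the right amount. Concretely, I would set $f(z)=f(U)-h(G;f)$ and join $z$ to each $v\in U$ with $f(v)$ parallel edges (or with one edge of suitably high multiplicity). An $f$-factor of $G'$ would restrict to an $f$-bounded subgraph $H$ of $G$ together with $f(U)-d_H(U)$ edges at $z$; since $z$ can absorb exactly $f(z)=f(U)-h(G;f)$ edge-endpoints and $H$ can supply at most $h(G;f)$ endpoints inside $G$, the degree $f(z)$ at $z$ can be met precisely when $d_H(U)=h(G;f)$. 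Thus $G'$ \emph{does} have an $f$-factor, and I want to extract from this the existence of an optimal dual pair.

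To get the dual pair, I would instead make $z$ slightly too hungry: set $f(z)=f(U)-h(G;f)+2$, so that by the degree-sum argument above no $f$-factor of $G'$ can exist (any $f$-bounded subgraph of $G$ leaves at least two endpoints at $z$ unmet, but the new degree demand exceeds what is available by $2$). By the $f$-Factor Theorem (Theorem~\ref{ffac}), there is a violated pair $(S',T')$ in $G'$, that is, a pair with $\dfc_{G'}(S',T')>0$; by the Parity Lemma (Lemma~\ref{parity}) applied in $G'$, in fact $\dfc_{G'}(S',T')\ge2$. The key step is then a case analysis on where $z$ lies relative to $(S',T')$, translating the violated pair in $G'$ back into a pair $(S,T)$ in $G$. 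Because $z$ is adjacent to everything with high multiplicity, putting $z\in S'$ contributes $f(z)$ to $f(S')$ and makes $G'-S'-T'=G-S-T$ after deleting $z$, so the component structure and hence $q$ is essentially preserved; the cases $z\in T'$ or $z\in R'$ are ruled out or handled by showing they cannot beat the $z\in S'$ case (high multiplicity forces any good pair to swallow $z$ into $S'$ and to avoid isolating $z$ in a bad component).

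Carrying out the $z\in S'$ case, I would write $S=S'\setminus\{z\}$ and $T=T'$ and compute $\dfc_G(S,T)$ in terms of $\dfc_{G'}(S',T')$. The deletion of $z$ from $S'$ lowers $f(S')$ by $f(z)=f(U)-h(G;f)+2$ and removes from $d_{G'-S'}(T')$ nothing (edges at $z$ are incident to $S'$), while $q$ is unchanged; tracking these terms should yield $\dfc_G(S,T)=\dfc_{G'}(S',T')-\big(f(U)-h(G;f)+2\big)+\text{(correction)}$, and the arithmetic should collapse to $f(U)-\dfc_G(S,T)\le h(G;f)$, i.e. $h(G;f)\ge f(U)-\dfc_G(S,T)$, exactly the desired reverse inequality. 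The main obstacle, and the step deserving the most care, is the case analysis verifying that the violated pair in $G'$ can be taken with $z\in S'$: I must argue that the parallel edges from $z$ are plentiful enough that any pair with $z\in T'$ or with $z$ in a component of $G'-S'-T'$ yields a nonpositive deficiency, so that the optimal (maximum-deficiency) pair necessarily absorbs $z$ into $S'$. Choosing the multiplicity of the edges at $z$ larger than $f(U)$ should make this forcing clean, after which the remaining computation is routine bookkeeping with the definition~\eqref{ddfc} of deficiency.
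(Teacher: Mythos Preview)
Your construction has the sign backwards, and with it the whole argument collapses.  You set $f(z)=f(U)-h(G;f)+2$ and assert that $G'$ then has no $f$-factor.  But an $f$-factor of $G'$ restricts to an $f$-bounded subgraph $H$ of $G$ with $d_H(U)=f(U)-f(z)=h(G;f)-2$; conversely, any such $H$ extends to an $f$-factor of $G'$ (each $v$ absorbs $f(v)-d_H(v)\le f(v)$ edges to $z$, and these total $f(z)$).  Whenever $h(G;f)\ge 2$ you obtain such an $H$ by deleting a single edge from an optimal $f$-bounded subgraph, so $G'$ \emph{does} have an $f$-factor and Tutte's theorem yields no violated pair.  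Making $z$ ``hungrier'' makes the factor \emph{easier} to find, not harder: it lets $z$ soak up more of the deficit.  Your own $z\in S'$ bookkeeping confirms the inconsistency---it would give $\dfc_G(S,T)\ge f(U)-h(G;f)+4$, contradicting weak duality.

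The repairable version sets $f(z)=f(U)-h(G;f)-2$ (handling $h(G;f)\ge f(U)-1$ separately).  Then an $f$-factor would force $d_H(U)=h(G;f)+2$, which is impossible, and Tutte produces a violated pair.  But now the step you call ``routine'' is exactly where the work lies: you must argue that a pair with $\dfc_{G'}\ge 2$ can be taken with $z\in S'$, and your heuristic that ``high multiplicity forces $z$ into $S'$'' is not quite right---large multiplicity penalises $z\in T'$ through $d_{G'-S'}(T')$, but the cases $T'=\{z\}$ with $T=\nul$, and $z\in R'$ with $T=\nul$, survive the crude bounds and need the same delicate parity arguments the paper carries out.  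The paper takes the complementary route: it adds $w$ with $f'(w)=\gamma$ and proves $G'$ \emph{does} satisfy Tutte's condition, extracting the optimal subgraph directly.  Either way, the three-case analysis on the location of the new vertex is the heart of the proof; it is not bookkeeping you can wave past.
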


\begin{proof}
Let $\gamma=\max_{S,T}\dfc(S,T)$.  As we have noted,
$\gamma\ge\dfc(\nul,\nul)\ge0$.
Lemma~\ref{weak} proves $h(G;f)\le f(U)-\gamma$; we prove that $G$ has
an $f$-bounded subgraph with degree-sum $f(U)-\gamma$.
Tutte's $f$-Factor Theorem gives us this result when $\gamma=0$,
so we may assume $\gamma\ge1$.

Form $G'$ by adding to $G$ a single vertex $w$ having $\gamma$ edges to each
vertex of $G$, and define $f'$ on $V(G')$ by letting $f'(w)=\gamma$ and
$f'(v)=f(v)$ for $v\ne w$.  The graph $G$ has an $f$-bounded subgraph with
degree-sum $f(U)-\gamma$ if and only if $G'$ has an $f'$-factor.  Hence it
suffices to show that $G'$ satisfies Tutte's condition for an $f'$-factor,
given $\dfc(S,T)\le\gamma$ for all $S,T\esub U$.

Let $U'=U\cup\{w\}$.  We consider an arbitrary partition $(R',S',T')$ of $U'$,
from which we define a corresponding partition $(R,S,T)$ of $U$ by deleting $w$
from the part in which it appears; that is, $X=X'\cap U$, where $X\in\{R,S,T\}$.

Let $\rho'=f'(S')+d_{G'-S'}(T')-q'(S',T')$ and $\rho=f(S)+d_{G-S}(T)-q(S,T)$,
where $q'(S',T')$ counts the components $G'[Q']$ of $G'[R']$ such that
$f'(Q')+\CC{Q',T'}$ is odd.  By definition, the deficiency of $(S',T')$
in $G'$ under $f'$ is $f'(T')-\rho'$, and we need to guarantee that this
deficiency is nonpositive.  Thus what we need to prove is $\rho'\ge f'(T')$.

By the definition of $\gamma$, we have $\rho\ge f(T)-\gamma$ for disjoint
$S,T\in V(G)$.  We consider three cases, depending on which part of
$(R',S',T')$ contains $w$.

\smallskip
{\bf Case 1:} {\it $w\in S'$.}
Here $f'(S')=f(S)+\gamma$, $d_{G'-S'}(T')=d_{G-S}(T)$, and $q'(S',T')=q(S,T)$.
Thus $\rho'=\rho+\gamma\ge f(T)=f'(T')$.

\smallskip
{\bf Case 2:} {\it $w\in R'$.}
Here $f'(S')=f(S)$ and $f'(T')=f(T)$.  Since $w\in V(G')-S'$, we have
$d_{G'-S'}(T')=d_{G-S}(T)+\gamma\C T$.  The graph $G'[R']$ is connected,
so $q'(S',T')\in\{0,1\}$.  Using $\rho\ge f(T)-\gamma$, we have
\begin{align}\label{inR}
\rho'&=f(S)+d_{G-S}(T)+\gamma\C T-q'(S',T')\nonumber\\
&=\rho+q(S,T)+\gamma\C T-q'(S',T')
~\ge~ f(T)+\gamma(\C T-1)+q(S,T)-q'(S',T')\nonumber\\
&= f'(T')+\gamma(\C T-1)-q'(S',T')+q(S,T),
\end{align}
Since $\gamma\ge1$ and $q'(S',T')\le 1$, the final expression above is at least
$f'(T')$ when $\C T\ge2$.

When $T=\nul$, we have $f'(T')=f(T)=0$, so we want $\rho'\ge0$.
From the first line of~\eqref{inR}, $\rho'=f(S)-q'(S',T')$.
Note that $q'(S',T')$, which is $0$ or $1$, has the same parity as $f'(R')$
since $\CC{R,T}=0$.  Thus $\rho'=f(S)$ if $f'(R')$ is even, while
$\rho'=f(S)-1$ if $f'(R')$ is odd.  Since $f'(S')\ge0$, proving $\rho'\ge0$
now follows from $f'(S')$ and $f'(R')$ having the same parity.  This holds
because $f'(U')$ is even, which follows from $f'(U')=f(U)+\gamma$ by the Parity
Lemma.

Finally, suppose $\C T=1$.  If $q'(S',T')=0$, then the computation in 
\eqref{inR} yields $\rho'\ge f'(T')$, so we may assume $q'(S',T')=1$.
Thus $f'(R')+\CC{R',T'}$, which equals $f(R)+2\gamma+\CC{R,T}$ because
$\C T=1$, is odd.  In particular, $f(R)+\CC{R,T}$ is odd.  This quantity
is the sum of the quantities of the form $f(Q)+\CC{Q,T}$ for the components
$G[Q]$ of $G-S-T$.  Hence at least one of them is odd, so $q(S,T)\ge1$,
which yields $\rho'\ge f'(T')$ by \eqref{inR}.

\smallskip
{\bf Case 3:} {\it $w\in T'$.}
Here $f'(S')=f(S)$ and $f'(T')=f(T)+\gamma$, but
$$d_{G'-S'}(T')=d_{G-S}(T)+\gamma(2\C T+\C R).$$
For $q'(S',T')$, we count components $G'[Q]$ of $G'-S'-T'$ such that
$f(Q)+\CC{Q,T}+\gamma\C Q$ is odd.  There are at most
$\C R$ such components.  We also use $\rho\ge f(T)-\gamma$ and $\gamma\ge1$
to compute
\begin{align*}
\rho'
&=f'(S')+d_{G'-S'}(T')-q'(S',T')\\
&\ge f(S)+d_{G-S}(T)+2\gamma\C T+(\gamma-1)\C R
~=~\rho+q(S,T)+2\gamma\C T+(\gamma-1)\C R\\
&\ge f(T)+\gamma(2\C T-1)+q(S,T)
~\ge~ f'(T')+\gamma(2\C T-2)+q(S,T).
\end{align*}
If $\C T\ge1$, then we obtain $\rho'\ge f'(T')$, as desired.

Hence we may assume $T=\nul$, so $f'(T')=\gamma$ and
$d_{G'-S'}(T')=\gamma\C R$.  We want $\rho'\ge\gamma$.
Since $q'(S',T')\le\C R$, we now have
\begin{align}\label{inT0}
\rho'
&=f'(S')+d_{G'-S'}(T')-q'(S',T')
=f(S)+\gamma\C R-q'(S',T')
\ge f(S)+(\gamma-1)\C R .
\end{align}
Since $f(S)\ge0$, we obtain $\rho'\ge (\gamma-1)\C R\ge 2\gamma-2\ge \gamma$
if $\gamma$ and $\C R$ are both at least $2$.

When $\gamma=1$, we seek $\rho'\ge1$, and \eqref{inT0} suffices unless
$f(S)=0$ and $q'(S',T')=\C R$.  The latter requires
$f'(v)+\CC{\{v\},T'}$ odd for each $v\in R$.  Since $T'=\{w\}$ and $\gamma=1$,
each $\CC{\{v\},T'}$ is $1$, so $f(v)$ is even for each $v\in R$.  Also,
$$
f'(U')=f'(S')+f'(T')+f'(R')=1+f'(R).
$$
Since $f'(U')$ is even by the Parity Lemma, $f(v)$ must be odd for some
$v\in R$.  The contradiction finishes the case $\gamma=1$.

We are left with $\C R\le 1$ and $\gamma\ge2$.
When $\C R=1$, with $R=\{v\}$, \eqref{inT0} reduces to
$\rho'=f(S)+\gamma-q'(S',T')$, with $q'(S',T')\in\{0,1\}$.
This yields $\rho'\ge\gamma$ unless $f(S)=0$ and $q'(S',T')=1$.
Since $\CC{\{v\},T'}=\gamma$, the latter requires
$f(v)+\gamma$ to be odd.  However, since $f'(T')=f'(w)=\gamma$,
$$f'(U')=f'(R')+f'(S')+f'(T')=f(v)+f(S)+\gamma=f(v)+\gamma,$$
which contradicts that $f'(U')$ is even.

Finally, we have $R=T=\nul$ with $\gamma\ge2$ and $S=U$.
Since $\C{R'}=0$, also $q'(S',T')=0$.  Thus $\rho'=f'(S)=f(U)$,
and we want $f(U)\ge\gamma$.  For any disjoint $\hS,\hT\esub U$,
let $\hR=U-\hS-\hT$.  The value $q(\hS,\hT)$ counts $1$ for each
component $G[\hQ]$ of $G[\hR]$ such that $f(\hQ)+\CC{\hQ,\hT}$ is odd.
When the value is odd it is positive, and when it is even it is nonnegative,
so we obtain an upper bound on $\dfc(\hS,\hT)$ if we count the full
value $f(\hQ)+\CC{\hQ,\hT}$ for each component.  Now we compute
\begin{align*}
\dfc(\hS,\hT)
&=q(\hS,\hT)-d_{G-\hS}(\hT)+f(\hT)-f(\hS)\\
&\le f(\hR)+\CC{\hR,\hT}-d_{G-\hS}(\hT)+f(\hT)-f(\hS)\\
&\le f(\hR)+d_{G-\hS}(\hT)-d_{G-\hS}(\hT)+f(\hT)+f(\hS) ~=~ f(U)
\end{align*}
Since the deficiency of each pair is at most $f(U)$, the maximum
deficiency is at most $f(U)$; that is, $f(U)\ge\gamma$.
\end{proof}

\end{document}